\newcommand{\ints}{\mathbb Z}
\newcommand{\str}{\mathcal{O}}
\newcommand{\proj}{\mathbb{P}}
\newcommand{\complex}{\mathbb C}
\theoremstyle{plain}
\numberwithin{equation}{section}
\newtheorem{theorem}{Theorem}[section]
\newtheorem*{theorem*}{Theorem}
\newtheorem*{conjecture*}{Nagata Conjecture}
\newtheorem*{conjecture1*}{SHGH Conjecture}
\theoremstyle{definition}
\newtheorem{question}[theorem]{Question}
\newtheorem{remark}[theorem]{Remark}
\newtheorem{example}[theorem]{Example}
\begin{document}
\title{Seshadri constants on surfaces with Picard number 1}
\author[Krishna Hanumanthu]{Krishna Hanumanthu}
\subjclass[2010]{Primary 14C20; Secondary 14H50}
\thanks{Author was partially supported by a grant from Infosys Foundation}
\address{Chennai Mathematical Institute, H1 SIPCOT IT Park, Siruseri,
  Kelambakkam 603103, India}
\date{October 17, 2016}
\email{krishna@cmi.ac.in}
\maketitle
\begin{abstract}
Let $X$ be a smooth projective surface with Picard number 1. Let $L$ be
the ample generator of the N\'eron-Severi group of $X$. Given an
integer $r\ge 2$, we prove lower bounds
for the Seshadri constant of $L$ at $r$ very general points in $X$.
\end{abstract}
\section{Introduction}
Seshadri constants are a local measure of positivity of a line bundle
on a projective variety. They arose out of an ampleness criterion of
Seshadri, \cite[Theorem 7.1]{Har70} and were defined by
Demailly, \cite{Dem}. They have become an important area of
research with interesting connections to other areas of mathematics. 
For a comprehensive account of Seshadri constants, see \cite{primer}. 

Let $X$ be a smooth projective variety and let $L$ be a nef line
bundle on $X$. Let $r \ge 1$ be an integer. For $x_1,\ldots,x_r \in
X$, the {\it Seshadri constant} of $L$ at $x_1,\ldots,x_r$ is defined
as follows. 
$$\varepsilon(X,L,x_1,\ldots,x_r):=  \inf\limits_{C \cap \{x_1,\ldots,x_r\}
  \ne \emptyset} \frac{L\cdot C}{\sum\limits_{i=1}^r {\rm
    mult}_{x_i}C}.$$ 


We remark that the infimum above is the same as the infimum taken over
irreducible, reduced curves $C$ such that $C \cap \{x_1,\ldots,x_r\}
\ne \emptyset$. Indeed, we have the inequality
$$\frac{L\cdot
  (C+D)}{\sum_{i=1}^r{\rm mult}_{x_i}(C)+\sum_{i=1}^r {\rm mult}_{x_i}(D)} \ge {\rm min} \left(\frac{L\cdot
  C}{\sum_{i=1}^r{\rm mult}_{x_i}(C)}, \frac{L\cdot
  D}{\sum_{i=1}^r{\rm mult}_{x_i}(D)}\right).$$

The Seshadri criterion for ampleness says that a line bundle $L$ on a
projective variety is ample if and only if $\varepsilon(X,L,x)>0$ for
every $x \in X$. 

The following is a well-known upper bound for Seshadri constants. Let
$n$ be the dimension of $X$. Then for
any $x_1,\ldots,x_r \in X$, 
\begin{eqnarray*}\label{wellknown}
\varepsilon(X,L,x_1,\ldots,x_r) \le
\sqrt[n]{\frac{L^n}{r}}.
\end{eqnarray*}

Next one defines $$\varepsilon(X,L,r) : = \max\limits_{x_1,\ldots,x_r \in X}
\varepsilon(X,L,x_1,\ldots,x_r).$$ 

It is known that $\varepsilon(X,L,r)$ is
attained at a very general set of points $x_1,\ldots,x_r \in X$; see
\cite{Ogu}. Here {\it very general} means that $(x_1,\ldots,x_r)$ is
outside a countable union of proper Zariski closed sets in $X^r  = X
\times X \times \ldots \times X$. 

In this
paper we will be concerned with lower bounds for $\varepsilon(X,L,r)$
when $X$ is a smooth projective surface with
Picard number 1, $L$ is the ample generator
of the 
N\'eron-Severi
group of
$X$ (see definitions below) and $r\ge 2$ is an integer. 

There has been extensive work on lower bounds for
$\varepsilon(X,L,1)$.

Let $X$ be an arbitrary smooth complex projective surface and let $L$
be an ample line bundle on $X$. It is easy to see that if $L$ is very
ample then $\varepsilon(L,x) \ge 1$ for any point $x \in X$. On the other hand,
if $L$ is ample, but not very ample, it is possible that
$\varepsilon(L,x) < 1$. In fact, Seshadri constants can be arbitrarily
small. Miranda has shown that given any {\it rational} number $\varepsilon > 0$, there exist a
surface $X$, an ample line bundle $L$ and a point $x \in X$ such that 
$\varepsilon(X,L,x) = \varepsilon$; see \cite[Example 5.2.1]{L}. In
Miranda's construction, the surface $X$ typically has a large Picard
number. It is possible that $\varepsilon(X,L,x) <
1$ even on a surface with Picard number 1; see \cite[Example 1.2]{BS}.

However, in an important paper \cite{EL}, 
Ein and Lazarfeld proved that
$\varepsilon(X,L,1) \ge 1$, if $L$ is ample. 
In fact, they prove the following theorem.

\begin{theorem}[Ein-Lazarsfeld]  
Let $X$ be a smooth projective surface and let $L$ be an ample line
bundle on $X$. Then 
$\varepsilon(X,L,x) \ge 1$ for all except possibly countably many points $x \in X$. Further,
if $L^2 > 1$, the set of exceptional points is actually
finite. 
\end{theorem}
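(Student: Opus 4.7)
The plan is to argue by contradiction via a family/Hodge-index argument (this is essentially Xu's approach to the Ein--Lazarsfeld theorem). Set $S = \{x \in X : \varepsilon(X,L,x) < 1\}$. For each $x \in S$, the infimum in the definition of the Seshadri constant (by the remark on irreducible curves in the introduction) is realized by, or at least can be approximated by, an irreducible reduced curve $C_x \ni x$ with $L \cdot C_x < m_x := \mathrm{mult}_x C_x$; since these are integers, $m_x \ge L \cdot C_x + 1$. The Hilbert scheme of $X$ has countably many irreducible components $\{H_i\}_{i \in I}$, and every such $C_x$ lies in some $H_i$. The strategy is to bound the contribution of each $H_i$ to $S$.

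Fix a component $H_i$ whose generic member is a bad curve, and let $\pi : \mathcal{C} \to T$ be a desingularized family of such curves (with $T$ an irreducible, possibly finite cover of $H_i$). The marked singular points give, after further base change, a section $\sigma : T \to \mathcal{C}$ with $\mathrm{mult}_{\sigma(t)} C_t \ge m$ for fixed $m$. Consider the induced map $p : T \to X$, $t \mapsto \sigma(t)$. The key step is a deformation-theoretic inequality (due to G.\ Xu): if $p$ is non-constant, then $C_t^2 \ge m(m-1)$ for general $t$. Intuitively, one infinitesimally moves $x_t$ and records how the multiplicity splits into distinct local branches. Combining this with the Hodge index theorem $L^2 \cdot C_t^2 \le (L \cdot C_t)^2 = d^2$ (where $d = L \cdot C_t$) and the assumption $m \ge d+1$ yields
\[
L^2 \cdot d(d+1) \le L^2 \cdot m(m-1) \le L^2 \cdot C_t^2 \le d^2,
\]
so $L^2 \le d/(d+1) < 1$, contradicting $L^2 \ge 1$ for an ample line bundle on a smooth projective surface.

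Hence for each component $H_i$ contributing a bad curve, the marked point map $p$ is constant: all bad curves in $H_i$ share the same bad point. Thus $H_i$ contributes at most a single point to $S$ (when positive-dimensional) or at most finitely many points via the finitely many singular points of its finitely many curves (when zero-dimensional). Summing over countably many components shows $S$ is at most countable.

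For the refined statement that $S$ is finite when $L^2 > 1$, I would sharpen the bookkeeping in the previous paragraph. Without the moving-point hypothesis one still has the adjunction bound $m(m-1) \le C^2 + K_X \cdot C + 2$, which together with Hodge index $C^2 \le d^2/L^2$ and $m \ge d+1$ forces, once $L^2 \ge 2$, an inequality of the form $(d+1)d \le d^2/L^2 + K_X \cdot C + 2$ that constrains $d$ and $m$ to lie in a bounded range for each numerical class; combining this with a bounding argument on the numerical classes $[C]$ (using that $L$ generates rationally and that $L^2 \ge 2$ rigidifies the Hodge inequality) should reduce to finitely many possible classes, each yielding finitely many bad points. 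I expect this refinement to be the main obstacle, since the countable-to-finite passage is precisely where one loses the crude contradiction and must instead carry out explicit numerical control; the earlier Xu plus Hodge step is by contrast quite clean.
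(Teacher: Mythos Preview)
The paper does not contain a proof of this statement: the Ein--Lazarsfeld theorem is quoted as background, with a citation to \cite{EL}, and no argument is supplied. So there is nothing in the paper to compare your proposal against.

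On its own merits, your argument for the first assertion (countability of the exceptional set $S$) follows the standard Xu/Ein--Lazarsfeld line and is correct in outline. The combination of Xu's inequality $C_t^2 \ge m(m-1)$ for a one-parameter family with moving singular point and the Hodge index inequality $(L\cdot C_t)^2 \ge L^2\,C_t^2$ does force $L^2 \le d/(d+1) < 1$, contradicting $L^2 \ge 1$; hence each irreducible family of pairs (curve, bad point) contributes at most one point to $S$, and $S$ is countable. Your organization is slightly loose (one should work with components of the incidence variety of pairs $(C,x)$ rather than of the Hilbert scheme of curves alone), but the substance is right.

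Your sketch for finiteness when $L^2 > 1$ is, as you acknowledge, incomplete. The adjunction bound $m(m-1) \le C^2 + K_X\cdot C + 2$ combined with Hodge index yields $(d+1)d \le d^2/L^2 + K_X\cdot C + 2$, but $K_X\cdot C$ is not a priori bounded (on a surface of general type it grows linearly with $d$), so this inequality alone does not bound $d$ or the numerical class of $C$. The passage from countable to finite in \cite{EL} requires a sharper argument than what you outline here; this is indeed the genuine gap in your proposal.
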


There are many other results calculating $\varepsilon(X,L,1)$ or giving
bounds for it. 
We will just mention a few that are of relevance to
us here. Bauer \cite{Bau} considered several cases of surfaces including
abelian surfaces of Picard number 1. 
Szemberg, in \cite{Sze} and \cite{Sze08},
dealt with surfaces of Picard number 1. Szemberg has a
conjecture for  a lower bound for $\varepsilon(X,L,1)$ for surfaces
with Picard number 1; see \cite[Conjecture]{Sze}. \cite{FSST}
gives new lower bounds for $\varepsilon(X,L,1)$ for an arbitrary surface
and an ample line bundle $L$. In the process, it verifies 
the conjecture
in \cite{Sze} in many cases.

Contrary to the case of $\varepsilon(X,L,1)$, 
there are not too many lower bounds for 
$\varepsilon(X,L,r)$ in the literature when $r \ge 2$.
We mention some results in this case. 

 K\"{u}chle  \cite{K} considers multi-point Seshadri
constants on an arbitrary projective variety. Syzdek and Szemberg  
\cite{SS} prove a lower bound for multi-point Seshadri
constants on 
any surface. 

For an arbitrary surface $X$ and a nef and big line bundle $L$ on $X$,
\cite[Theorem 1.2.1]{HR08} gives good lower bounds for
$\varepsilon(X,L,r)$ in terms of the degree of the least degree curve
passing through $r$ general points with a given multiplicity
$m$. Using this theorem, \cite[Corollary 1.2.2]{HR08} has a bound for
$\varepsilon(X,L,r)$ when $L$ is the ample generator the N\'eron-Severi group of $X$.

For an arbitrary surface $X$, when $L$ is very ample, \cite[Theorem I.1]{Har} gives lower bounds for
$\varepsilon(X,L,r)$. 

When $X$ has Picard number 1 and $L$ is the ample
generator of the N\'eron-Severi group, \cite[Theorem 3.2]{Sze} also gives
lower bounds for $\varepsilon(X,L,r)$. \cite[Section 8]{Bau} considers
multi-point Seshadri constants for abelian surfaces. 

Another method of obtaining lower bounds for $\varepsilon(X,L,r)$ comes
from the following bound (see \cite{Bir,Roe,RR}):
\begin{eqnarray}\label{biran}
\varepsilon(X,L,r) \ge
\varepsilon(X,L,1)\varepsilon(\proj^2,\str_{\proj^2}(1),r).
\end{eqnarray}

The well-known Nagata Conjecture is a statement about
$\varepsilon(\proj^2,\str_{\proj^2}(1),r)$. It says that, for $r \ge 10$, 
$\varepsilon(\proj^2, \str_{\proj^2}(1),r) = \frac{1}{\sqrt{r}}.$
This was proved by Nagata when $r$ is a square and is open in all other
cases. But there are several results which give good bounds
close to the bound expected by the Nagata Conjecture (see \cite{Har,HR}). 
Using known bounds for $\varepsilon(X,L,1)$, some of which were mentioned
above, and bounds on $\varepsilon(\proj^2,\str_{\proj^2}(1),r)$, one can get bounds 
on $\varepsilon(X,L,r)$ by \eqref{biran}.

Let $X$ be a smooth projective surface over $\complex$. 
The {\it Picard group} of $X$, denoted $\rm{Pic}(X)$, is the group of
isomorphism classes of line bundles on $X$. The {\it divisor
class group} of $X$, denoted $\rm{Div}(X)$, is the group of divisors on
$X$ modulo linear equivalence. Then $\rm{Pic}(X)$ and $\rm{Div}(X)$ are
isomorphic as abelian groups. The {\it N\'eron-Severi} group of $X$ is defined as $$\rm{NS}(X) :=
\rm{Pic}(X)/\rm{Pic}^0(X),$$ where $\rm{Pic}^0(X)$ is the subgroup of 
$\rm{Pic}(X)$ consisting of line bundles which are algebraically equivalent to
the trivial line bundle. The N\'eron-Severi group $\rm{NS}(X)$ is a finitely generated abelian
group and the {\it Picard number} of $X$ is defined to be $$\rho(X) := \rm{rank ~
  NS}(X).$$ 

Our main result Theorem \ref{main}
gives lower bounds for $\varepsilon(X,L,r)$ when 
 $X$ has Picard number 1 and $L$ is the ample
generator of $\rm{NS}(X)$. Some examples of surfaces with Picard
number 1 include $\proj^2$, general K3 surfaces, and general hypersurfaces
in $\proj^3$ of degree at least 4. 

Our bounds are not always easily comparable to bounds given previously
in the literature, but in many cases, we get better bounds.
We give some examples comparing our result with 
known bounds.

We work throughout over the complex number field $\complex$. By a {\it
  surface}, we mean a nonsingular projective variety of dimension 2. 
We denote the Seshadri constant $\varepsilon(X,L,r)$ simply by
$\varepsilon(L,r)$ when the surface $X$ is clear from context. 

{\bf Acknowledgements:} We sincerely thank Brian Harbourne and Tomasz Szemberg for carefully
reading this paper and making many useful suggestions. We also thank
the referee for making numerous corrections and suggestions that
improved the paper. 

\section{Main theorem}
The following is the main theorem of this paper. 
\begin{theorem}\label{main}
Let $X$ be a smooth projective surface with Picard number 1. Let $L$ be
the ample generator of $\rm{NS}(X)$. 
Let $r \ge 2$ be an
integer. Then we have the following.
\begin{enumerate}
\item If $(r,L^2) = (2,6)$ then $\varepsilon(L,r) \ge
  \frac{3}{2}$. If the equality holds, $\varepsilon(L,2)$ is achieved by a curve $C
  \in |L|$ which passes through two very general points with multiplicity
  two each. \label{3}
\item If $(r,L^2) \ne (2,6)$ then one of the
  following holds: 
\begin{enumerate}
\item $\varepsilon(L,r) \ge
\sqrt{\frac{r+2}{r+3}}\sqrt{\frac{L^2}{r}}$, or \label{1} 
\item if  $\varepsilon(L,r) <
\sqrt{\frac{r+2}{r+3}}\sqrt{\frac{L^2}{r}}$ then, for some $d \ge 1$, there exists an
irreducible, reduced curve $C\in |dL|$ which passes through $s\le r$
very general points $x_1,\ldots,x_s$ with multiplicity one each, $s-1 \le C^2$,
and 
$\varepsilon(L,r) =  \frac{C\cdot L}{s} = \frac{dL^2}{s}$.  \label{2}
\end{enumerate}
\end{enumerate}
\end{theorem}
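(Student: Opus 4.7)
The plan is to reduce to a Seshadri curve and then use Xu's lemma with Cauchy--Schwarz to constrain its multiplicity pattern. If $\varepsilon(L,r)\ge\sqrt{\tfrac{r+2}{r+3}}\sqrt{\tfrac{L^2}{r}}$ we are in case (2a), so assume the strict inequality (and in case (1), assume $\varepsilon(L,2)<3/2$). Then $\varepsilon(L,r)<\sqrt{L^2/r}$, so the Seshadri constant is submaximal and, by a standard limit argument (the multi-point analogue of Lazarsfeld's Proposition~5.1.9), the infimum is realised by an irreducible reduced curve $C$ meeting some subset $\{x_1,\ldots,x_s\}$ of the very general points, with $s\le r$ and multiplicities $m_1\ge\cdots\ge m_s\ge 1$. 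Since $L$ generates $\mathrm{NS}(X)$, $C\in|dL|$ for some $d\ge 1$, so $C^2=d^2L^2$, $L\cdot C=dL^2$, and $\varepsilon(L,r)=dL^2/M$ with $M=\sum_{i=1}^s m_i$.

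The key inputs are Xu's lemma, $d^2L^2=C^2\ge\sum_i m_i^2-m_1$, and the Cauchy--Schwarz inequality $M^2\le s\sum_i m_i^2\le r(d^2L^2+m_1)$ (using $s\le r$). Squaring the hypothesis gives $M^2>\tfrac{r(r+3)}{r+2}d^2L^2$; combining the three inequalities yields the fundamental constraint
\[
m_1>\frac{d^2L^2}{r+2}.
\]
If $m_1=1$, then all $m_i=1$, so $M=s$ and Xu's inequality becomes $d^2L^2\ge s-1$ --- exactly case (2b). If instead $m_1\ge 2$, I will further combine this constraint with the one-point Xu bound $d^2L^2\ge m_1(m_1-1)$ and the integrality of $d,L^2,r,s,m_i$ in a finite case analysis (splitting on $s=1$ versus $s\ge 2$ and on small values of $d$) to show that the only admissible configuration is $d=1$, $L^2=6$, $r=s=2$, $m_1=m_2=2$. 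This yields $\varepsilon(L,2)=6/4=3/2$, realised by a curve in $|L|$ through two very general points with multiplicity two each, i.e.\ case (1).

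The main obstacles are twofold. First, the multi-point submaximality step producing the Seshadri curve: the single-point version is classical, but the multi-point version, where the curve may pass through only $s<r$ of the chosen points, requires extra care. Second, the arithmetic case analysis when $m_1\ge 2$: the three governing inequalities $m_1(r+2)>d^2L^2$, $d^2L^2\ge m_1(m_1-1)$, and $d^2L^2\ge\sum m_i^2-m_1$ become simultaneously tight precisely at the exceptional $(r,L^2)=(2,6)$ configuration, so one must carefully juggle the subcases without inadvertently eliminating the extremal example.
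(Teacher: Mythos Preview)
Your overall architecture matches the paper's: reduce to a Seshadri curve $C\equiv dL$, apply the Ein--Lazarsfeld--Xu inequality, and separate the case $m_1=1$ (giving (2b)) from $m_1\ge 2$. However, two concrete problems prevent the sketch from closing.

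First, you have the Xu inequality backwards. With the ordering $m_1\ge\cdots\ge m_s$, the correct bound is $C^2\ge\sum_i m_i^2-m_s$ (one subtracts the \emph{smallest} multiplicity), not $\sum_i m_i^2-m_1$. Your weaker version is not strong enough: the configuration $d=1$, $L^2=3$, $r=s=2$, $(m_1,m_2)=(2,1)$ satisfies every one of your listed constraints (weak Xu gives $3\ge 3$; one-point Xu gives $3\ge 2$; the hypothesis gives $9>7.5$; and $m_1(r+2)=8>3$), so your ``finite case analysis'' could never isolate $(r,L^2)=(2,6)$ as the unique exception. The strong form of Xu kills this spurious case immediately, since it forces $d^2L^2\ge 4$.

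Second, even after fixing Xu, Cauchy--Schwarz is too blunt for $s\ge 3$. From $M^2\le s\sum m_i^2$ and $\sum m_i^2\le d^2L^2+m_s$ one would need $m_s\le d^2L^2/(s+2)$ to reach (2a), and this can fail (e.g.\ when $m_s=1$ and $d^2L^2$ is small). The paper replaces Cauchy--Schwarz by the sharper arithmetic inequality
\[
\frac{s(s+3)}{s+2}\Bigl(\sum_{i=1}^s m_i^2-m_s\Bigr)\ \ge\ \Bigl(\sum_{i=1}^s m_i\Bigr)^2,
\]
valid whenever $m_1\ge2$ and either $s\ge 3$ or $s=2$ with $(m_1,m_2)\ne(2,2)$ (a special case of \cite[Lemma~2.3]{Han}). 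Combined with the correct Xu bound this gives (2a) directly, with no case analysis beyond $s=1$, $s=2$, and the single exceptional pair. Your proposal would need to prove this inequality (or an equivalent), and nothing in the outlined ``splitting on small values of $d$'' indicates how; indeed the inequality is about the $m_i$ alone and has nothing to do with $d$.
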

\begin{proof}
If $\varepsilon(L,r)$ is optimal $\Big{(}$namely, equal to
$\sqrt{\frac{L^2}{r}}\Big{)}$, there is nothing to prove. So we assume that 
$\varepsilon(L,r) < \sqrt{\frac{L^2}{r}}$. In this case, 
there is in fact
an irreducible and reducible  curve $C$ which computes
$\varepsilon(L,r)$. See \cite[Proposition 4.5]{Sze01}, \cite[Lemma
2.1.2]{HR08}, or \cite[Proposition 1.1]{BS}.

Since the Picard number of $X$ is 1, $C$ is algebraically equivalent,
and hence numerically equivalent, to 
$dL$ for some $d \ge 1$. Let $k=L^2$. Then $L\cdot C = kd$ and $C^2
= kd^2$. 

Let
$x_1,\ldots,x_r$ be very general points of $X$ so that $\varepsilon(L,r) = \varepsilon(L,x_1,\ldots,x_r)$. 
Denote the multiplicity of $C$ at $x_i$ to be $m_i$. 
We rearrange the points so that $m_1
\ge m_2 \ge \ldots \ge m_r$. 
Let $s \in \{1,\ldots,r\}$ be such that 
$m_s > 0$ and $m_{s+1} = \ldots = m_r = 0$.

Since the points $x_1,\ldots,x_r$ are very general, there is a
non-trivial one-parameter family of irreducible and reduced curves 
$\{C_t\}_{t\in T}$ parametrized by some smooth curve $T$ and containing
points $x_{1,t},\ldots,x_{r,t} \in C_t$ with mult$_{x_{i,t}}(C_t) \ge
  m_i$ for all $1\le i \le r$ and $t \in T$. 

By a result of Ein-Lazarsfeld \cite{EL} and Xu \cite[Lemma 1]{X1}, we have 
\begin{eqnarray}\label{el}
d^2k = C^2 \ge
m_1^2+\ldots+m_s^2-m_s.
\end{eqnarray}

First assume that $m_1 = 1$. 
Then $m_i = 1$ for all $i=1,\ldots,s$  and 
$m_{s+1}=\ldots=m_r=0$. By \eqref{el}, we have $s-1 \le C^2$. 
Since $C$ computes the Seshadri constant, $\varepsilon(L,r) = \frac{C
  \cdot L}{s} = \frac{dk}{s}$. So we are in case \eqref{2}.

We assume now that $m_1 \ge 2$.

The desired inequality in case \eqref{1} is 
$\frac{L\cdot C}{\sum_{i=1}^r m_i} = \frac{dk}{\sum_{i=1}^rm_i} \ge \sqrt{\frac{r+2}{r(r+3)}}\sqrt{k}$.
It is equivalent to 
\begin{eqnarray}\label{desired1}
d^2k \ge \frac{r+2}{r(r+3)} \left(\sum_{i=1}^r
  m_i\right)^2.
\end{eqnarray}

Note that it suffices to prove 
\begin{eqnarray}\label{desired}
d^2k \ge \frac{s+2}{s(s+3)} \left(\sum_{i=1}^r
  m_i\right)^2.
\end{eqnarray}
Indeed, it is clear that $\frac{s+2}{s(s+3)} \ge \frac{r+2}{r(r+3)}$
for $s \le r$.

We use the following inequality which is a special case of 
\cite[Lemma 2.3]{Han}. Suppose $m_1 \ge 2$, $s \ge 3$ or if $s=2$ then
$(m_1,m_2) \ne (2,2)$. 
\begin{eqnarray}\label{ineq}
\frac{(s+3)s}{s+2}\left(\sum_{i=1}^s m_i^2 -m_s\right) \ge
  \left(\sum_{i=1}^s m_i\right)^2.
\end{eqnarray}

If $s \ge 3$ then \eqref{ineq} is
applicable and together with \eqref{el}, it easily gives \eqref{desired}.  
Suppose next that $s=2$. If $(m_1,m_2) \ne (2,2)$ we
again use \eqref{ineq} to obtain \eqref{desired}. 

If $(m_1,m_2) = (2,2)$, the right hand side of \eqref{desired} is
$\frac{32}{5}$. By \eqref{el}, $d^2k \ge 6$. So \eqref{desired} fails
only if $d=1$ and $k=6$. So we have case \eqref{3} of the theorem.

Finally, if $s=1$ then we have $d^2k \ge m_1^2-m_1$ by
\eqref{el}. Since $r \ge 2$, we have $m_1^2-m_1 \ge \frac{r+2}{r(r+3)}m_1^2$ if $m_1\ge 2$. So
\eqref{desired1} holds. If $m_1=1$ then we are in case \eqref{2}.

This completes the proof.
\end{proof}

\begin{remark}\label{nagata-biran-szemberg}
Let $X$ be any surface and $L$ an ample line bundle on $X$. The
Nagata-Biran-Szemberg Conjecture 
predicts that $\varepsilon(X,L,r) = \sqrt{\frac{L^2}{r}}$ for $r\ge r_0$,
for some $r_0$ depending on $X$ and $L$ 
(see \cite{Sze01}). Our bound in Theorem \ref{main} shows that on a
surface $X$ with Picard number 1, $\varepsilon(X,L,r)$ is arbitrarily
close to the optimal value of $\sqrt{\frac{L^2}{r}}$ as $r$ tends to
$\infty$, as we explain below. 

If Case \eqref{2} of Theorem \ref{main} holds then $\varepsilon(X,L,r)
= \frac{dk}{s}$ where $k=L^2$ and $C$ is a curve numerically
equivalent to $dL$ which passes through $s \le r$ general points. Since we always
have $\varepsilon(X,L,r) \le \sqrt{\frac{k}{r}}$, we get $d^2 \le
  \frac{s^2}{rk}$. On the other hand, for such a curve $C$ to exist,
  we must have $s < h^0(X,dL)$. By the Riemann-Roch theorem applied
  asymptotically, 
the dimension of the space of global sections of $dL$ is bounded by $\frac{d^2L^2}{2} =
  \frac{d^2k}{2}$. Hence $s \le  \frac{d^2k}{2}$. Putting the above
  two inequalities together, we get $d^2 \le s \frac{s}{rk} \le
  \frac{d^2k}{2}\frac{s}{rk}$. This in turn gives $2r \le s$, which is a
  contradiction. 

Hence Case \eqref{2} of Theorem \ref{main} does {\em
  not} hold for large $r$. This means that Case \eqref{1} does. So we
conclude that $\varepsilon(X,L,r)$ is arbitrarily
close to the optimal value of $\sqrt{\frac{L^2}{r}}$ as $r$ tends to
infinity.

\end{remark}

\begin{remark}\label{HR08}
Let $X$ be any surface and $L$ a nef and big line
bundle. In \cite[Theorem 1.2.1]{HR08}  Harbourne and Ro\'e give lower bounds for
$\varepsilon(X,L,r)$. When $X$ has Picard number 1 and $L$ is an ample
line bundle on $X$, \cite[Corollary 1.2.2]{HR08} gives
similar bounds.  
The bounds stated in \cite{HR08} are implicit and therefore difficult
to compare with our bounds. In all situations when the statements in
\cite{HR08} can be made effective, the bounds in \cite{HR08} turn out
to be better than our bounds in Theorem \ref{main}.

As an application, Harbourne and Ro\'e give
good lower bounds when $X = \proj^2$; see \cite[Corollary 1.2.3]{HR08}. 
In order to use these bounds, one needs information about the smallest
degree curves passing through $r$ general points with a given
multiplicity $m$. For specific classes of surfaces, it may be possible to
obtain this information and in turn get good lower bounds for
$\varepsilon(X,L,r)$. 
\end{remark}

\begin{remark}
Let $X$ be a surface with Picard number 1 and let $L$ be the ample 
generator of $\rm{NS}(X)$. Let $r \ge 1$. 
Szemberg \cite[Theorem 3.2]{Sze} obtains the following bound: 
\begin{eqnarray}\label{szemberg}
  \varepsilon(L,r) \ge \left\lfloor \sqrt{\frac{L^2}{r}}\right\rfloor.
\end{eqnarray}

We compare our result Theorem \ref{main} with  
this bound.

If  $\frac{L^2}{r}$ is the square of an integer, \eqref{szemberg} shows that the Seshadri
 constant is optimal, namely equal to $\sqrt{\frac{L^2}{r}}$, while
our bound is sub-optimal. So the bound in
 \eqref{szemberg} is always better in this situation. 

On the other hand, our bound is always better if $L^2 < r$, because
the bound in \eqref{szemberg} is 0 in this case. 

If $L^2$ is large compared to $r$, the bound in \eqref{szemberg} is
better. More precisely, for a fixed $r$, there is some $N_r$ such that 
if $L^2 \ge N_r$, \eqref{szemberg} uniformly gives a better bound. 
However, if $L^2 < N_r$, our bound is better in some cases and
\eqref{szemberg} is better in some cases.

We give an example to illustrate this. Before considering the example,
we note that given any integer $k\ge 4$, a general hypersurface $X$ of
degree $k$ in $\proj^3$ has Picard number 1. In fact, if $L = \str_X(1)$, the Picard group
of $X$ is isomorphic to $\ints \cdot L$, by
Noether-Lefschetz. Further, $L^2 = k$.

Let $r=10$. For $L^2 = 150$, the 
the bound in \cite{Sze} is 3, while our bound
is 3.72.   
For $L^2 = 1050$, the bound in \cite{Sze} is 10, while ours is 9.84. 
If $L^2 = 2500$, then the bound in \cite{Sze} is 15, and our bound is
15.19. Starting at about $L^2=5000$, the bound in \cite{Sze} is
uniformly better than ours.  
\end{remark}
\begin{remark}\label{harbourne}
Let $X$ be a surface and let $L$ be a very ample line bundle. Let $k =
L^2$. Let $\varepsilon_{r,k}$ be the maximum element in the following
set: 
$$\Bigg\{\frac{\lfloor d\sqrt{rk}\rfloor}{dr}\Bigg|1 \le d \le
\sqrt{\frac{r}{k}}\Bigg\} \cup \Bigg\{ \frac{1}{\lceil
  \sqrt{\frac{r}{k}} \rceil} \Bigg\} \cup \Bigg\{ \frac{dk}{\lceil
  d\sqrt{rk}\rceil}  \Bigg| 1 \le d \le \sqrt{\frac{r}{k}}\Bigg\}.$$

Harbourne \cite[Theorem I.1]{Har} proves that $\varepsilon(L,r) \ge
\varepsilon_{r,k}$, unless $k\le r$ and $rk$ is a square, in which case 
$\sqrt{\frac{k}{r}} = \varepsilon_{r,k}$ and $\varepsilon(L,r) \ge
\sqrt{\frac{k}{r}}-\delta$, for every positive rational number $\delta$.

Suppose now that $X$ has Picard number 1 and let $L$ be a very ample line
bundle on $X$.

When $r < L^2$ our bound is better than \cite[Theorem I.1]{Har}. 
If $r \ge L^2$, our result is not always uniformly comparable with this result.
The
two bounds are in general very close to each other. 
We give an example to illustrate this. Let $r=10$. Then for $L^2  =6$
our bound is 0.744 and the bound in \cite{Har} is 0.75. On the other
hand, for $L^2=7$, our bound is 0.803 while the bound in \cite{Har} is
0.8. 

It should be noted however
that \cite[Theorem I.1]{Har} is a general result proved for a very
ample line bundle on \emph{any} surface. 
\end{remark}

\begin{remark}\label{from-biran}
We compare our bounds with the bounds that can be derived from
\eqref{biran}. 

Let $X$ be a surface with Picard number 1 and let $L$ be the ample
generator of $\rm{NS}(X)$. \cite[Conjecture]{Sze} says that 
$\varepsilon(X,L,1) \ge \frac{p_o}{q_o}k$ where $k = L^2$ and 
$(p_o,q_0)$ is a primitive solution to Pell's
equation: $q^2-kp^2=1$. \cite{FSST} verifies this conjecture whenever
$k$ is of the form $n^2-1$ or $n^2+1$ for a positive integer $n$. In all
cases (not just if $X$ has Picard number one) it is proved that
$\varepsilon(X,L,1) \ge  \frac{p_o}{q_o}k$ or $\varepsilon(X,L,1)$ belongs to a
finite set of exceptional values. 

Let $r=101$, $L^2=35$. Then our bound is 0.5858. Assuming
$\varepsilon(\proj^2,\str_{\proj^2}(1),r) = \sqrt{\frac{1}{r}}$ (which is the
value predicted by the Nagata Conjecture)  and using the
bound $\varepsilon(X,L,1) \ge \frac{35}{6}$ in \cite[Theorem 4.1]{FSST},
\eqref{biran} gives us
$\varepsilon(X,L,r) \ge 0.5804$. If $L$ is very ample, \cite[Theorem
I.1]{Har} gives
the bound $0.5833$. Note that the optimal value of $\varepsilon(X,L,r)$ is 
$\sqrt{\frac{35}{101}} = 0.5886$ in this case. 

Bauer \cite[Theorem 6.1]{Bau} explicitly calculates the Seshadri constants
$\varepsilon(X,L,1)$, where $X$ is an abelian surface with Picard number 1, in terms of the primitive solution to
Pell's equation. These values, together with bounds on
$\varepsilon(\proj^2,\str_{\proj^2}(1),r)$ can be used (via
\eqref{biran}) to obtain bounds for $\varepsilon(X,L,r)$. These bounds are
sometimes better than our bound, but not always. 

\end{remark}

\subsection{Examples}

\begin{example}\label{p2}

Let $(X,L) = (\proj^2,\str_{\proj^2}(1))$.

The values of $\varepsilon = \varepsilon(\proj^2,\str_{\proj^2}(1,r))$ for $2 \le r \le
9$ are given below: 

\underline{$r=2,3,4$} : Here  $\varepsilon = 1/2$. In all three cases the Seshadri constant
is achieved by a line through two points. Note that the bound expected
in Case \eqref{1} in Theorem \ref{main} is 0.63, 0.52 and 0.4 when
$r=2,3,4$ respectively. So when $r=2$ or $r=3$, Case \eqref{2} in
Theorem \ref{main} is realized.  

\underline{$r=5$}: In this case $\varepsilon = 2/5$, achieved by a conic
through five general points. The bound predicted by Case
\eqref{1} in Theorem \ref{main} is 0.41 which is more than 2/5. 
So this is also an instance where Case \eqref{2} in Theorem \ref{main}
is achieved. 

\underline{$r=6,7,8,9$}: In these cases $\varepsilon = 2/5,3/8,6/17,3$
respectively. In each case, the bound of Case \eqref{1} of Theorem
\ref{main} holds. 

For $r\ge 10$, the Nagata Conjecture says that
$\varepsilon(\proj^2,\str_{\proj^2}(1,r)) = \frac{1}{\sqrt{r}}$.

\end{example}

\begin{example}\label{k3}  Let $X$ be a  general K3 surface. It is well-known that
  Picard number is 1 for such surfaces. Let $L$ be the ample generator
  of $\rm{NS}(X)$ with $k = L^2$. Let $r \ge 3$. 
We will show in this case that 
$\varepsilon(X,L,r) \ge \sqrt{\frac{r+2}{r+3}}\sqrt{\frac{k}{r}}$. 

If not, by Theorem \ref{main}, for some $d \ge 1$, there
exists a curve $C\in |dL|$ passing through $s\le r$ very general
points with multiplicity one at each point and such that
$\varepsilon(X,L,r) = \frac{L \cdot C}{s}$. 

We first suppose that $C^2 = d^2k \ge s$. Then 
$\varepsilon(X,L,r) = \frac{dk}{s}\ge \sqrt{\frac{k}{r}}$.
But this violates the assumption that 
$\varepsilon(X,L,r) < \sqrt{\frac{r+2}{r+3}}\sqrt{\frac{k}{r}}$

Now we compute $h^0(dL)$ and show that $C^2 \ge s$ always holds.
By the Kodaira vanishing, $h^1(dL) = h^2(dL) = 0$. It follows from the 
Riemann-Roch theorem that  $h^0(dL) = \frac{d^2k}{2}+2$
for $d \ge 1$. 
Since the linear system $|dL|$ contains
a curve through $s$ general points, its dimension is at least $s$. On
the other hand, if its dimension is more than $s$, then $|dL|$
contains a curve $D$ through at least $s+1$ points. Then by
inequality \eqref{el}, $d^2k = C^2 = D^2 \ge s$. In this case, we are done by the
argument in the previous paragraph. So suppose that 
$h^0(dL) = \frac{d^2k}{2}+2 = s+1$. Hence 
$C^2 =  d^2k = 2s-2$. If $s\ge 2$, then $C^2 \ge s$. If $s=1$, then 
anyway $C^2 = d^2k \ge 1$. So in both cases, we are done by the 
argument in the previous paragraph.

\end{example}

\begin{remark}
Another class of surfaces which can have Picard number 1 are abelian
surfaces. Bauer \cite{Bau} studies both single and multi-point Seshadri
constants on such surfaces generally (not just when the Picard number
is 1). In \cite[Section 8]{Bau}, lower bounds for multi-point Seshadri
constants are obtained for {\it arbitrary} points $x_1,\ldots,x_r$. 
These bounds are naturally smaller than our bounds for 
$\varepsilon(L,r)$, which is the Seshadri constant at $r$ {\it general}
points.  
\end{remark}

\begin{remark}
We do not know of any example where Case \eqref{3} of Theorem
\ref{main} is realized. 
\end{remark}

Though the Nagata-Biran-Szemberg Conjecture (see Remark
\ref{nagata-biran-szemberg}) predicts that for a fixed polarized
surface $(X,L)$, the Seshadri constants $\varepsilon(X,L,r)$ are
optimal for large enough $r$, we can ask the following question.

\begin{question}\label{question}
Given an integer $r \ge 1$, is there a pair $(X,L)$ where $X$ is a surface
with Picard number 1 and
$L$ is the ample generator of $\rm{NS}(X)$ such that $\varepsilon(X,L,r)$ is sub-optimal,
that is, $\varepsilon(X,L,r) < \sqrt{\frac{L^2}{r}}$? 
\end{question}
For $r=1$, an affirmative answer is given by Bauer \cite[Theorem
6.1]{Bau}. This result shows that for a polarized abelian surface $(X,L)$
of type $(1,d)$, the Seshadri constant $\varepsilon(X,L,1)$ is
sub-optimal if $L^2 = 2d$ is not a square. For $r=2,3,5,6,7$ and $8$, $(X,L)=(\proj^2,\str_{\proj^2}(1))$
(see Example \ref{p2}) gives an affirmative answer. For other values of $r$,
we do not know an answer to Question \ref{question}. 

\bibliographystyle{plain}

\end{document}